\documentclass[12pt]{article}

\usepackage{tikz}

\usepackage{subfigure}
\usepackage[english]{babel}

\usepackage[center]{caption2}
\usepackage{amsfonts,amssymb,amsmath,latexsym,amsthm}
\usepackage{multirow}
\usepackage{mathtools}

\topmargin  = -0.4 in \oddsidemargin = 0.25 in
\setlength{\textheight}{8.5in} \setlength{\textwidth}{6in}
\setlength{\unitlength}{1.0 mm}

\def\diam{\mbox{diam}}

\newtheorem{thm}{Theorem}

\newtheorem{lem}[thm]{Lemma}

\newtheorem{conj}[thm]{Conjecture}

\newtheorem{claim}{Claim}

\begin{document}

\title{The spectral radius of graphs without trees of diameter at most four\thanks{The work was supported by NNSF of China (No. 11671376) and NNSF of Anhui Province (no. 1708085MA18).}}
\author{Xinmin Hou$^a$, \quad Boyuan Liu$^b$, \quad Shicheng Wang$^c$\\
\quad Jun Gao$^d$, \quad Chenhui Lv$^e$\\
\small $^{a,b,c,d,e}$ Key Laboratory of Wu Wen-Tsun Mathematics\\
\small School of Mathematical Sciences\\
\small University of Science and Technology of China\\
\small Hefei, Anhui 230026, China.\\
}

\date{}

\maketitle

\begin{abstract}
Nikiforov (LAA, 2010) conjectured that for given integer $k$, any graph $G$ of sufficiently large order $n$ with spectral radius $\mu(G)\geq \mu(S_{n,k})$ contains all trees of order $2k+2$, unless $G=S_{n,k}$, where $S_{n,k}=K_k\vee \overline{K_{n-k}}$, the join of a complete graph of order $k$ and an empty graph of order $n-k$.
In this paper, we show that the conjecture is true for trees of diameter at most four.

\noindent{\bf Keywords}: Brualdi-Solheid-Tur\'{a}n type problem, spectral radius, Erd\H{o}s-S\'os conjecture\\
{\bf MSC2010}: 05C50, 05C35

\end{abstract}

\section{Introduction}
In this paper, all graphs considered are simple and finite.
For a given graph $G$,
let $A(G)$ be the adjacency matrix and let $\mu(G)$ be the largest eigenvalue of $A(G)$, we call $\mu(G)$ the {\it spectral radius} of $G$.

As Tur\'{a}n type problems ask for maximum number of edges in graphs of given order not containing a specified family of subgraphs, Brualdi-Solheid-Tur\'{a}n type problems ask for maximum spectral radius of graphs of given order not containing a specified family of subgraphs. A survey of  the Brualdi-Solheid-Tur\'{a}n type problems can be found in~\cite{survy of SEP}.

In this paper, we mainly concern a Brualdi-Solheid-Tur\'{a}n type conjecture proposed by Nikiforov~\cite{SEP of paths and cycles}. Let $S_{n,k}$ be the graph obtained by joining every vertex of a complete graph of order $k$ to every vertex of an independent set of order $n-k$, that is $S_{n,k}=K_k\vee \overline{K_{n-k}}$, the join of $K_k$ and $\overline{K_{n-k}}$, and let $S_{n,k}^+$ be the graph obtained from $S_{n,k}$ by adding a single edge to the independent set of $S_{n,k}$.

\begin{conj}[Nikiforov, 2010]\label{CONJ: conjecture 1}
Let $k\geq2$ and let $G$ be a graph of sufficiently large order $n$. If $\mu(G)\geq \mu(S_{n,k})$, then $G$ contains all trees of order $2k+2$ unless $G=S_{n,k}$.


\end{conj}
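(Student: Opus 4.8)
The plan is to argue by contradiction, driving everything through one embedding gadget so that \emph{all} trees on $2k+2$ vertices --- paths and other large-diameter trees included --- are treated uniformly. Assume $G$ has large order $n$, $\mu(G)\ge\mu(S_{n,k})$, $G\ne S_{n,k}$, and that $G$ omits some tree $T$ on $2k+2$ vertices; I will produce a copy of $T$. Write $S_{2k+2,k}^{+}=K_k\vee(K_2\cup\overline{K_k})$ for the $(2k+2)$-vertex graph in which a $k$-clique $K$ is joined completely to an independent set of size $k+2$ carrying one extra edge $ab$ (so $S_{2k+2,k}^{+}\subseteq S_{n,k}^{+}$). The first step is the \textbf{Embedding Lemma}: every tree $T$ on $2k+2$ vertices embeds into $S_{2k+2,k}^{+}$. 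Since trees are bipartite, K\"onig's theorem gives $\tau(T)=\nu(T)\le k+1$. If $\tau(T)\le k$, map a minimum cover into $K$ and distribute the remaining independent vertices over the spare clique slots and the independent part; each edge of $T$ meets $K$, which is universal, so it is realised and no extra edge is needed. If $\tau(T)=k+1$ then $T$ has a perfect matching and balanced parts $X,Y$ with $|X|=|Y|=k+1$; pick a leaf $\ell\in X$ with neighbour $m\in Y$, send $X\setminus\{\ell\}$ into $K$, set $\ell\mapsto a,\ m\mapsto b$, and send $Y\setminus\{m\}$ into the independent part. The only edge of $T$ avoiding $K$ is $\ell m$, carried by $ab$. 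This case captures exactly the trees $S_{n,k}$ itself cannot contain --- precisely those with $\tau(T)=k+1$, among them $P_{2k+2}$ --- so the whole problem reduces to finding a copy of $S_{2k+2,k}^{+}$ inside $G$.

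Next I record the spectral data. Let $\mathbf{x}$ be the Perron eigenvector of $A(G)$ normalised by $\max_v x_v=x_u=1$, and set $\mu=\mu(G)$. The equitable partition of $S_{n,k}$ yields $\mu(S_{n,k})=\tfrac12\big(k-1+\sqrt{(k-1)^2+4k(n-k)}\big)=(1+o(1))\sqrt{kn}$, so $\mu\ge(1+o(1))\sqrt{kn}$. The equations $\mu x_v=\sum_{w\sim v}x_w$ and $\mu^2 x_v=\sum_{w\sim v}\sum_{z\sim w}x_z$ taken at $v=u$ give $d(u)\ge\mu$ and $\sum_{w\sim u}d(w)\ge\mu^2\ge(1+o(1))kn$, so $G$ has a vertex of near-maximal degree with a heavy neighbourhood. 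The structural aim is to promote this into $k$ near-universal vertices.

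The heart of the argument --- and where I expect the real difficulty to sit --- is a \emph{stability} statement: a $T$-free graph with $\mu(G)\ge\mu(S_{n,k})$ must be close to $S_{n,k}$, in the sense that there is a set $K$ of $k$ vertices, each of eigenvector-weight $1-o(1)$ and adjacent to all but $o(n)$ vertices, while $V(G)\setminus K$ spans only $o(n)$ edges. The subtlety is that $T$-freeness yields this rigidity only through a global sparsity input: one needs $e(G)\le(1+o(1))kn$, which is exactly the edge (Erd\H{o}s--S\'os) version of the statement being proved. The reason high spectral radius then forces concentration is that a graph with $O(kn)$ edges spread out has average degree $O(k)$ and hence $\mu=O(k)\ll\sqrt{kn}$, so to reach $\mu\ge\sqrt{kn}$ the edges must pile onto $k$ hubs. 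For bounded-diameter $T$ this sparsity is controlled by local neighbourhood expansion and is available --- in particular for diameter at most four, the regime secured here --- while for the path it is the Erd\H{o}s--Gallai theorem; for a general large-diameter tree the bound $e(G)\le(1+o(1))kn$ is precisely the Erd\H{o}s--S\'os conjecture for $T$, which is the principal obstruction to the unrestricted statement.

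Granting stability, I finish by passing to the extremal graph. Since $S_{n,k}$ is $T$-free exactly when $\tau(T)=k+1$, a $T$-free graph $G^{*}$ of maximum spectral radius satisfies $\mu(G^{*})\ge\mu(S_{n,k})$, and it suffices to prove $G^{*}=S_{n,k}$; any other $T$-free $G$ then has $\mu(G)\le\mu(G^{*})=\mu(S_{n,k})$ with equality forcing $G=S_{n,k}$, which gives the theorem (the cases $\tau(T)\le k$ are not obstructions, as $T$-freeness there already caps $\mu$ below $\mu(S_{n,k})$ via degree bounds, and $S_{n,k}$ itself contains such $T$). On the extremizer the usual eigenvector-perturbation and edge-switching arguments apply: they upgrade the near-universal $K$ to a genuine $k$-clique complete to $V(G^{*})\setminus K$, after which $V(G^{*})\setminus K$ must be independent --- for otherwise an edge there, together with $k$ further vertices of $V(G^{*})\setminus K$ (all of which are joined to $K$), produces a copy of $S_{2k+2,k}^{+}$, hence of $T$ by the Embedding Lemma, contradicting $T$-freeness. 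Therefore $G^{*}=S_{n,k}$. The two load-bearing steps are the Embedding Lemma, which disposes of every tree uniformly and so settles the large-diameter obstruction at the embedding level, and the stability step, whose large-diameter (Erd\H{o}s--S\'os) input is the true bottleneck.
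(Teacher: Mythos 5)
The statement you set out to prove is Nikiforov's conjecture itself, which is open; the paper proves only the special case of trees of diameter at most four (its Theorem~2). Your proposal does not close that gap, and by your own admission it cannot: the load-bearing ``stability'' step is asserted rather than proved, and its essential input --- that a $T$-free graph satisfies $e(G)\le(1+o(1))kn$ --- is, as you note, precisely the Erd\H{o}s--S\'os conjecture for $T$, open for general trees. So at the crucial point the argument is conditional on an open conjecture, i.e.\ there is a genuine gap, not a proof. Even granting Erd\H{o}s--S\'os for $T$, the deduction you sketch is incomplete: from average degree $O(k)$ and $\mu\ge(1+o(1))\sqrt{kn}$ one does not immediately get a set of exactly $k$ hubs, each of eigenvector weight $1-o(1)$ and adjacent to all but $o(n)$ vertices, with a sparse remainder; that concentration statement needs a real argument (compare: a single dominating vertex already gives $\mu\ge\sqrt{n-1}$, and sorting out how the weight splits among candidate hubs is exactly the hard local work). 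Likewise the closing ``eigenvector-perturbation and edge-switching'' upgrade of the approximate structure to a genuine $K_k$ joined to everything, and the dismissal of the $\tau(T)\le k$ cases ``via degree bounds,'' are gestures where the theorem's content actually lives.

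What is correct in your proposal is the Embedding Lemma, and it coincides with the paper's Lemma~\ref{LEM: Structure}: the same bipartition argument ($|A|\le k$ handled via $K_{|A|,\,2k+2-|A|}\subseteq S_{2k+2,k}$; the balanced case $|A|=|B|=k+1$ handled by routing a leaf edge through the extra edge of $S^+_{2k+2,k}$), so the pivot ``reduce to finding $S_{n,k}$ plus one extra edge inside $G$'' is shared --- the paper invokes it in Claim~\ref{:C1} and again in Case~3. But for the provable regime the paper's mechanism is quite different from global stability: Lemma~\ref{LEM: Au-Matrix} applied to $f(x)=x^2-(k-1)x-k(n-k)$ produces a single vertex $u$ with $B_u=\sum_{x\in N^1(u)}d_L(x)-(k-2)d_G(u)-k(n-k)\ge 0$, a purely local weighted second-neighbourhood condition, and the proof then splits on $d_G(u)$, in each case greedily embedding the star forest $F_T$ obtained by deleting the center of $T$ --- this is exactly where $\mathrm{diam}(T)\le 4$ enters and why your uniform treatment of all trees stalls --- with McLennan's theorem (Lemma~\ref{LEM: ES-4}), its variant Lemma~\ref{LEM: V-ES-4}, and the Erd\H{o}s--Gallai matching bound (Lemma~\ref{matching theorem}) supplying the counting. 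If you restrict your scheme to diameter at most four you would still have to rebuild that local machinery, since even there the stability route as written is not carried out.
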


The Tur\'{a}n type version of Conjecture~\ref{CONJ: conjecture 1} is the well-known Erd\H{o}s-S\'{o}s Conjecture~\cite{E-S conj} which states that every finite simple graph with average degree greater than $k-2$  contains a copy of any tree of order $k$ as a subgraph. The Erd\H{o}s-S\'{o}s Conjecture attracts many attentions and was verified for many specific family of trees, especially for trees of diameter at most four~\cite{diameter 4}.

Nikiforov verified Conjecture~\ref{CONJ: conjecture 1} for paths~\cite{SEP of paths and cycles}.
In this paper we show that Conjecture~\ref{CONJ: conjecture 1} holds for all trees of  diameter at most four, here is our main theorem.

\begin{thm}\label{:Main}
 For $k\geq2$ and $n>{2(k+2)^4}$, every graph $G$ of order $n$ with $\mu(G)\geq \mu(S_{n,k})$ contains all trees $T$ of order $2k+2$ with $\diam(T)\le 4$ as a subgraph, unless $G=S_{n,k}$.
\end{thm}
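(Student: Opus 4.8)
The plan is to proceed by a standard "spectral extremal" two-stage argument. First I would use the spectral radius hypothesis to deduce strong structural information about $G$: that its maximum degree is very large, and in fact that $G$ must contain a "near-$S_{n,k}$" structure, namely a small set of roughly $k$ vertices of high degree whose neighborhoods cover almost all of $V(G)$. The key quantitative input is the value of $\mu(S_{n,k})$. One computes that $\mu(S_{n,k})$ is close to $\sqrt{kn}$ (more precisely it is the largest root of a small quadratic/cubic arising from the quotient matrix of the equitable partition $K_k \mid \overline{K_{n-k}}$), which is of order $\Theta(\sqrt{n})$. Using the eigenvector equation $\mu^2 x_u = \sum_{v} (\text{walks of length } 2)$ together with $\mu \geq \mu(S_{n,k}) \sim \sqrt{kn}$, I would bound the number of edges and the degree sequence, concluding that $G$ has at least about $kn$ edges and that there are roughly $k$ vertices dominating the spectral structure.

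**Next I would pass to the embedding step,** where the Erdős–Sós result for diameter-four trees enters. A tree $T$ of order $2k+2$ and diameter at most four has a specific shape: it is a "double star"-like or "spider/broom" structure — there is a central vertex (diameter $\le 4$ means all vertices are within distance two of a center, so $T$ consists of a center, its neighbors, and leaves hanging off those neighbors). The plan is to show that whenever $G \neq S_{n,k}$ but $\mu(G) \geq \mu(S_{n,k})$, the extra edges forced by strict structural deviation from $S_{n,k}$ (for instance an edge inside the "independent" part, giving an $S_{n,k}^+$-like configuration) create exactly the branching needed to embed any such $T$ greedily. Concretely, I would locate $k$ high-degree vertices $v_1,\dots,v_k$ whose common neighborhood is enormous (size $\gg 2k+2$ since $n > 2(k+2)^4$), embed the center of $T$ and its down-neighbors into these and their shared neighbors, and then use the large available leaf-pool to attach all remaining leaves. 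The crucial point is that embedding fails only when $G$ looks \emph{exactly} like $S_{n,k}$, because in $S_{n,k}$ itself the independent set genuinely blocks the second level of branching for certain trees of order $2k+2$.

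**The main obstacle** I anticipate is the tightness of the extremal case: I must show that $G$ avoids $T$ \emph{only} when $G = S_{n,k}$, not merely when $G$ is close to it. This requires a stability-plus-cleaning argument. I would first prove a rough stability statement — $\mu(G) \geq \mu(S_{n,k})$ forces $G$ to be within $o(n)$ edit-distance of $S_{n,k}$ — and then show that any single deviation (a missing edge between the $K_k$-part and the independent part, or an extra edge within the independent part) either strictly decreases the spectral radius below $\mu(S_{n,k})$ or else supplies a configuration that lets the embedding go through. Handling the case $G = S_{n,k}^+$ (one extra edge in the independent set) will be delicate, since this graph is a natural competitor: I expect to need a careful comparison showing $\mu(S_{n,k}^+) > \mu(S_{n,k})$ and that $S_{n,k}^+$ \emph{does} contain every diameter-four tree of order $2k+2$, so it is not a counterexample. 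A secondary technical hurdle is verifying the embedding for the full range of diameter-four tree shapes (spiders, brooms, double brooms) uniformly, which is where the diameter-four Erdős–Sós theorem and the largeness condition $n > 2(k+2)^4$ are used to guarantee enough room in the leaf-pool.
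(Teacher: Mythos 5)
Your outline has the right landmarks (the role of $S_{n,k}^+$, the shape of diameter-four trees as a center with stars hanging off its neighbors, the use of the diameter-four Erd\H{o}s--S\'os theorem), but the two steps that carry all the weight are either wrong as stated or not actually supplied. First, the quantitative deduction at the start fails: from $\mu(G)\ge\mu(S_{n,k})\approx\sqrt{k(n-k)}$ the standard bound $\mu(G)\le\sqrt{2e(G)}$ only yields $e(G)\gtrsim kn/2$, not the ``at least about $kn$ edges'' you claim. Lemma~\ref{LEM: ES-4} needs $e(G)>\frac{2k}{2}n=kn$ to produce a $T_{2k+2}$ of diameter at most four, so you are short by exactly a factor of $2$, and that factor is the entire difficulty of the spectral version of the problem; it cannot be recovered from a global edge count. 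Second, the ``rough stability'' statement (that $\mu(G)\ge\mu(S_{n,k})$ forces $G$ within $o(n)$ edit distance of $S_{n,k}$, with $k$ dominating vertices of huge common neighborhood) is asserted but never derived, and it is not a routine consequence of the eigenvalue equation; proving such a stability theorem is itself a substantial project and is nowhere close to following from the computations you describe.

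The paper avoids both issues by working locally rather than globally. Setting $f(x)=x^2-(k-1)x-k(n-k)$ (whose largest root is $\mu(S_{n,k})$) and $B=f(A(G))$, Lemma~\ref{LEM: Au-Matrix} guarantees a single vertex $u$ with $B_u\ge 0$, i.e.
$\sum_{x\in N^1(u)}d_L(x)\ge (k-2)d_G(u)+k(n-k)$,
a density condition on the second neighborhood of $u$ that is strong enough to compensate for the missing factor of $2$. The embedding is then done entirely inside $N^1(u)\cup N^2(u)$, rooting $T$ at $u$ (or at a neighbor of $u$) and placing the star forest $C_T$ obtained by deleting the root, with a case split on $d_G(u)$ versus $\omega(C_T)$ and $2(k+2)^3$; the spider $S_{1,2,\ldots,2}$ needs the Erd\H{o}s--Gallai matching theorem as a separate tool, which your sketch does not anticipate. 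The extremal case is not handled by perturbative spectral comparison with $S_{n,k}^+$ at all: instead, every branch of the argument that fails to embed $T$ ends by exhibiting $S_{n,k}$ as a subgraph of $G$, and Lemma~\ref{LEM: Structure} (a purely combinatorial statement about $S_{2k+2,k}^+$ containing every tree of order $2k+2$) then forces $G=S_{n,k}$. If you want to pursue your stability route you would need to prove the stability theorem from scratch; as written, the proposal does not contain a proof.
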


The rest of the paper is arranged as follows. In Section 2, we give some lemmas and notation which will be used in the paper,  and  the proof of Theorem~\ref{:Main} will be given in Section 3.

\section{Lemmas and notation}

We first give some notation not defined before. Let $G=(V, E)$ be a graph. For $x\in V(G)$, define $N_G^i(x):=\{y:~y\in V(G),~d_G(x, y)=i\}$ for $ i\geq 1$, where $d_G(x,y)$ is the distance between $x$ and $y$ in $G$. In particular, $d_G(x):=|N_G^1(x)|$ is the degree of $x$ in $G$. Write $\delta(G)$ and $\Delta(G)$ for the minimum and maximum degrees of $G$, respectively. Let $\omega(G)$ and  $\diam(G)$ be the number of components and diameter of $G$, respectively. For non-empty subset $S\subseteq V$, write  $G[S]$ for the subgraph of $G$ induced by $S$ and $e(G)=|E(G)|$. For $S_1, S_2\subseteq V(G)$, let $E_G(S_1, S_2)$ be the set of edges of $G$ with one end in $S_1$ and the other in $S_2$ and write $e_G(S_1,S_2)=|E_G(S_1, S_2)|$. If $S_1$ and $S_2$ are disjoint, define  $G[S_1,S_2]$ be the  bipartite subgraph with bipartite sets $S_1$ and $S_2$ and edge set $E_G(S_1, S_2)$. The subscript $G$ will be omitted if $G$ is clear from the context. We write $[1,n]$ for the set of integers $\{1,2,\ldots, n\}$.

Write $S_k$ for a star with $k$ leaves, and we call an isolated vertex a trivial star. A vertex with the maximum degree in a star is called a center of the star. Let $S_{1,2,\ldots,2}$ be a tree obtained from a star of order $k+2$ by subdividing $k$ edges, also called a spider of order $2k+2$ with one leg of length one and the others of length two.
Write $T_\ell$ (resp. $T_{\le \ell}$) for  a tree of order $\ell$ (resp. at most $\ell$) and define

$$\mathcal{T}_{\ell}=\{ T_{\ell}\, :\,   \diam(T_{\ell})\le 4\} \mbox{ and }\mathcal{T}_{\le\ell}=\{ T_{\le\ell}\, :\,   \diam(T_{\le\ell})\le 4\},$$ and $\mathcal{T}_{2k+2}^*=\mathcal{T}_{2k+2}\setminus \{S_{1,2,\ldots,2}\}.$

Let $\mathcal{F}$ be a family of graphs. A graph $G$ is said to be $\mathcal{F}$-free if $G$ contains no member of $\mathcal{F}$ as a subgraph.
{It is well known that, either a tree $T$ has precisely one center (called {\it centered tree}), or $T$ has precisely two adjacent centers (called {\it bicentered tree})}. Furthermore, for a tree $T\in\mathcal{T}_{\le l}$, the deletion of a center of $T$ reduces a forest with each component a star.
In the following proof, a tree $T\in\mathcal{T}_{\le l}$ always is seen as a {\it rooted tree} with root at its center (for a bicentered tree, we choose one of its centers as root such that the number of components  is as large as possible in the forest obtained by deleting the root).
Let $C_T$ (resp. $F_T$) be the star forest consisting of all stars (resp. all nontrivial stars) by deleting the root of $T$. Then $F_T\subseteq C_T$ and $C_T-F_T$ consists of trivial stars (i.e. isolated vertices).

The following matrix theory lemma~\cite{Gao-Hou-17} is the theoretical base of our proof.

\begin{lem}[Lemma~6 in~\cite{Gao-Hou-17}]\label{LEM: Au-Matrix}
Given two positive integers $a, b$ and a nonnegative symmetric irreducible matrix $A$ of order $n$, let $\mu$ be the largest eigenvalue of $A$ and let $\mu'$ be the  largest root of the polynomial $f(x)=x^2-ax-b$.
Define  $B=f(A)$ and let $B_j=\sum_{i=1}^nB_{ij}$ for $j=1,2,\ldots, n$. If $B_j\leq0$ for all $j=1,2,\ldots,n$, then $\mu\leq\mu'$ with equality holds if and only if $B_j=0$ for all $j=1,2,\ldots, n$.
\end{lem}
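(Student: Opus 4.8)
The plan is to exploit the Perron--Frobenius structure of $A$ together with the fact that, on the Perron eigenvector, $B=f(A)$ acts as the scalar $f(\mu)$. Since $A$ is nonnegative, symmetric and irreducible, its eigenvalues are real and its largest eigenvalue $\mu$ equals the spectral radius; by Perron--Frobenius it carries a strictly positive eigenvector $x=(x_1,\ldots,x_n)^{T}>0$ with $Ax=\mu x$. Because $x$ is an eigenvector of $A$ for $\mu$, it is also an eigenvector of the matrix polynomial $B=A^2-aA-bI$ for the eigenvalue $f(\mu)=\mu^2-a\mu-b$, that is, $Bx=f(\mu)\,x$.

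The key step is to pair this identity with the all-ones vector $\mathbf 1$, which is exactly what converts the column-sum hypothesis into a statement about $f(\mu)$. Evaluating the scalar $\mathbf 1^{T}Bx$ in two ways gives, on one hand, $\mathbf 1^{T}(Bx)=f(\mu)\,\mathbf 1^{T}x=f(\mu)\sum_i x_i$, and on the other hand, since the $j$-th entry of the row vector $\mathbf 1^{T}B$ is precisely the column sum $B_j=\sum_i B_{ij}$,
$$\mathbf 1^{T}Bx=\sum_{j=1}^{n}B_j\,x_j.$$
Because $B_j\le 0$ and $x_j>0$ for every $j$, the right-hand side is $\le 0$, so $f(\mu)\sum_i x_i\le 0$; as $\sum_i x_i>0$, this forces $f(\mu)\le 0$.

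It remains to translate $f(\mu)\le 0$ into $\mu\le\mu'$ and to pin down equality. The parabola $f(t)=t^2-at-b$ opens upward and, because $a,b>0$, has discriminant $a^2+4b>0$ with one negative and one positive root, the positive one being $\mu'$; hence $f(t)\le 0$ exactly on the interval between the two roots. As $A$ is nonnegative we have $\mu\ge 0$, which lies to the right of the negative root, so $f(\mu)\le 0$ is equivalent to $\mu\le\mu'$ and $f(\mu)=0$ is equivalent to $\mu=\mu'$. For the equality case, $\mu=\mu'$ means $f(\mu)=0$, i.e.\ $\sum_j B_j x_j=0$; since each summand $B_jx_j\le 0$ and $x_j>0$, this holds if and only if $B_j=0$ for every $j$. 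I expect no serious obstacle here: the entire argument is a single Rayleigh-type pairing, and the only points demanding care are the appeal to Perron--Frobenius to guarantee $x>0$ (which is what makes the equality case an ``if and only if'') and the elementary sign analysis of $f$, where the hypothesis $\mu\ge 0$ is used to rule out the negative root.
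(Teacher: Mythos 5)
Your proof is correct. The paper itself gives no proof of this lemma (it is quoted from Lemma~6 of the reference [Gao--Hou], a submitted manuscript), but your Perron-vector pairing argument --- evaluating $\mathbf 1^{T}Bx$ two ways, using $Bx=f(\mu)x$ and $x>0$ to get $f(\mu)\le 0$, and then the sign analysis of the upward parabola together with $\mu\ge 0$ --- is exactly the standard argument behind lemmas of this type in the spectral Tur\'an literature, and it handles the equality case correctly via the strict positivity of the Perron vector.
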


Mclennan~\cite{diameter 4} verified Erd\H{o}s-S\'os conjecture  for trees of diameter at most four.
\begin{lem}[Theorem 1 in~\cite{diameter 4}]\label{LEM: ES-4}
Every graph $G$ with $e(G)>\frac{(k-2)|V(G)|}2$ contains a $T_{k}$ of diameter at most four.
\end{lem}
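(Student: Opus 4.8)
\emph{Plan.} The claim is the Erd\H{o}s--S\'os conjecture restricted to the family $\mathcal{T}_{k}$, so I fix an arbitrary $T\in\mathcal{T}_{k}$ and aim to embed it in any $G$ with $e(G)>\frac{(k-2)|V(G)|}{2}$. I would argue by contradiction, taking a $T$-free counterexample $G$ with $|V(G)|=n$ minimum. Two cheap reductions pin down the structure of $G$. First, writing $e(G)=\sum_{C}e(C)$ over the components $C$, some component satisfies $e(C)>\frac{(k-2)|C|}{2}$; by minimality $G$ equals that component, i.e.\ $G$ is connected. Second, if some vertex $v$ had $d_G(v)\le\frac{k-2}{2}$, then $e(G-v)=e(G)-d_G(v)>\frac{(k-2)(n-1)}{2}$, so $G-v$ would still satisfy the hypothesis on fewer vertices and, by minimality, contain $T$ --- a contradiction. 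Hence $\delta(G)>\frac{k-2}{2}$. Finally, $\binom{n}{2}\ge e(G)>\frac{(k-2)n}{2}$ forces $n\ge k$, and the average degree $\frac{2e(G)}{n}>k-2$ supplies a vertex $x$ with $d_G(x)\ge k-1$.

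\emph{Embedding scheme.} I root $T$ at its center and use the decomposition from Section~2: deleting the root leaves the star forest $C_T$, whose nontrivial components form $F_T$. Concretely $T$ is a two-level tree with center $c$, children $v_1,\dots,v_s$, of which $v_1,\dots,v_t$ carry $d_1\ge\cdots\ge d_t\ge1$ leaves (these are the centers of $F_T$) and $v_{t+1},\dots,v_s$ are leaves of $c$; here $k=1+s+\sum_{i\le t}d_i$. I would map $c\mapsto x$, choose $s$ distinct children-images $y_1,\dots,y_s\in N_G(x)$ (possible since $d_G(x)\ge k-1\ge s$), and then realise the grandchildren as a system of disjoint neighbourhood representatives: I seek, for each $i\le t$, a set of $d_i$ vertices of $N_G(y_i)$, all distinct and disjoint from $\{x,y_1,\dots,y_s\}$. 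By the defect form of Hall's theorem (equivalently, an integral max-flow argument), such an assignment exists if and only if for every $I\subseteq[t]$
$$\Big|\bigcup_{i\in I}N_G(y_i)\setminus\{x,y_1,\dots,y_s\}\Big|\ \ge\ \sum_{i\in I}d_i.$$
The whole problem thus reduces to choosing the children-images $y_i$ so that this expansion condition holds.

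\emph{Main obstacle.} The difficulty is precisely the gap between the available minimum degree $\frac{k-2}{2}$ and the up-to-$(k-1)$ forbidden vertices that a greedy placement may meet: for a single index the bound $|N_G(y_i)\setminus\{x,y_1,\dots,y_s\}|\ge\delta(G)-s$ can be vacuous when $s$ is large. The saving feature is the trade-off $\sum_{i\le t}d_i=k-1-s$, so large $s$ forces small total demand while large demand forces small $s$ and hence weak competition for neighbours. I expect the verification of Hall's condition to be the crux, and I would handle it by a dichotomy on the local structure of $\{x\}\cup N_G(x)$. If this set is sparse, each chosen $y_i$ has most of its neighbours outside the $k$ used vertices and the expansion condition holds with room to spare. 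The adversarial case is when $\{x\}\cup N_G(x)$ is nearly a clique, for then the neighbourhoods overlap heavily; but a clique on at least $k$ vertices already contains $T$, and the unique extremal obstruction, a clique component on $k-1$ vertices, is excluded by connectivity together with the \emph{strict} inequality $e(G)>\frac{(k-2)n}{2}$ (which $K_{k-1}$ meets with equality). Turning this qualitative dichotomy into a proof requires a careful case analysis on the degree sequence $(d_i)$ and on how the near-clique attaches to the rest of $G$ --- locating, via the strict density and $\delta(G)>\frac{k-2}{2}$, the few ``escape'' neighbours outside the clique that repair the deficient sets $I$. This is the step I expect to absorb essentially all of the work, and it is exactly where McLennan's original argument concentrates its effort.
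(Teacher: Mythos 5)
The paper offers no proof of this lemma at all: it is imported as a black box, being precisely Theorem~1 of McLennan~\cite{diameter 4}, whose proof occupies that entire article. So the only question is whether your sketch amounts to a self-contained proof, and it does not. Your reductions are sound: for a fixed $T\in\mathcal{T}_k$, a vertex-minimum counterexample is connected, satisfies $\delta(G)>\frac{k-2}{2}$, and contains a vertex $x$ of degree at least $k-1$; the description of a diameter-at-most-four tree as a depth-two rooted tree (center, children $v_1,\dots,v_s$, grandchild demands $d_1,\dots,d_t$ with $k=1+s+\sum_i d_i$) and the defect-Hall reformulation of the grandchild placement are both correct. But at the decisive point --- verifying the expansion condition $\bigl|\bigcup_{i\in I}N_G(y_i)\setminus\{x,y_1,\dots,y_s\}\bigr|\ge\sum_{i\in I}d_i$ for a suitable choice of $x$ and of the $y_i$ --- you explicitly defer, writing that this is ``where McLennan's original argument concentrates its effort.'' That step \emph{is} the theorem; a proposal that stops there has established only the easy preprocessing.

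The gap is moreover not mere unfinished bookkeeping. Your plan extracts from the density hypothesis exactly two consequences, $\delta(G)>\frac{k-2}{2}$ and $\Delta(G)\ge k-1$, and never returns to the global edge count $e(G)>\frac{(k-2)n}{2}$; but these local facts are strictly weaker than the hypothesis. Each $y_i$ guarantees only about $\frac{k-2}{2}-s$ fresh neighbours, which is vacuous once $s$ is near $\frac{k}{2}$ --- exactly the spider-like trees with many legs of length two, which are so delicate that the host paper must treat the spider $S_{1,2,\ldots,2}$ separately (via the Erd\H{o}s--Gallai matching theorem, Lemma~\ref{matching theorem}, in its Case~3) and even excludes it from $\mathcal{T}^*_{2k+2}$ in Lemma~\ref{LEM: V-ES-4}. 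Your sparse/near-clique dichotomy also does not partition the hard instances: a graph can be locally dense without $\{x\}\cup N(x)$ being close to a clique (e.g.\ several dense blobs overlapping in $N(x)$), in which case neither horn of your dichotomy applies, the root image $x$ may need to be re-chosen, and no mechanism is given for producing the ``escape'' neighbours that repair deficient sets $I$. In short: correct reductions and a correct reformulation, but the core of the argument --- the part that genuinely uses the edge count and constitutes McLennan's contribution --- is missing.
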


\begin{lem}[Theorem 4.1 in~\cite{matching theorem}]\label{matching theorem}
Given integers $k\geq 1$ and $n\geq\frac{5k}2$. Let $G$ be a graph of order $n$. If $G$ does not contain a matching of size $k$, then $e(G)\leq e(S_{n,k-1})$. The equality holds if and only if $G\cong S_{n,k-1}$.
\end{lem}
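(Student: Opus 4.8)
The final statement is the classical Erd\H{o}s--Gallai theorem on matchings, specialised to the range $n\ge 5k/2$ in which the complete split graph $S_{n,k-1}$ overtakes the clique $K_{2k-1}$; I would reprove it through the classical Berge--Tutte deficiency formula, which delivers the extremal bound and the equality case together.

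Set $s=k-1$. First I would reduce to the tight case: passing from $G$ to an edge-maximal supergraph $G'$ with no matching of size $k$ only increases the edge count, and edge-maximality forces $\nu(G')=s$, since adding any missing edge creates a matching of size $s+1$ while raising the matching number by at most one; note $G'\ne K_n$ as $n\ge 5k/2>2s+1$. Thus it suffices to bound $e(G)$ when $\nu(G)=s$, and in an equality case no edge can have been added, so $G=G'$ is already edge-maximal. For such $G$ the deficiency equals $n-2s>0$, so I fix a Berge--Tutte set $S$ with $|S|=t$ for which the number of odd components of $G-S$ equals $t+n-2s=:q$.

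I then split the edges into those inside $S$ (at most $\binom{t}{2}$), those between $S$ and $V\setminus S$ (at most $t(n-t)$), and those inside $V\setminus S$, which live only within the components of $G-S$ and so number at most $\sum_j\binom{c_j}{2}$, where $c_1,\dots,c_m$ are the component orders with $\sum_j c_j=n-t$ and at least $q$ of them odd. A vertex count gives $t\le s$, and by convexity of $\binom{\cdot}{2}$ the sum $\sum_j\binom{c_j}{2}$ is maximised by taking $q-1$ singletons together with one component of order $2(s-t)+1$ (which is automatically odd). This yields $e(G)\le E(t)$, where
$$E(t)=\binom{t}{2}+t(n-t)+\binom{2(s-t)+1}{2}=\tfrac{3}{2}t^{2}+\Bigl(n-4s-\tfrac{3}{2}\Bigr)t+\bigl(2s^{2}+s\bigr).$$
Since $E$ is strictly convex in $t$, its maximum over $0\le t\le s$ sits at an endpoint, namely $\max\{E(0),E(s)\}=\max\{\binom{2s+1}{2},\ \binom{s}{2}+s(n-s)\}$. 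The two endpoint values coincide exactly at $n=\tfrac{5k}{2}-1$, so for $n\ge\tfrac{5k}{2}$ one has $E(s)>E(0)$ and hence $e(G)\le E(s)=\binom{k-1}{2}+(k-1)(n-k+1)=e(S_{n,k-1})$.

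For equality, strict convexity together with $E(s)>E(0)$ pins down $t=s$ uniquely, which forces the distinguished component to have order $2(s-s)+1=1$; thus $G-S$ is an independent set of order $n-s$, while tightness of the three edge bounds makes $S$ a clique completely joined to $G-S$, i.e. $G=K_{s}\vee\overline{K_{n-s}}=S_{n,k-1}$. The main obstacle is the bookkeeping in the middle step: verifying that concentrating the component masses into a single odd clique really is optimal (this is where convexity interacts with the parity constraint forced by the Tutte set) and that $E(t)$ is strictly convex, so that the maximum lies at an endpoint and the equality analysis is rigid. A purely matching-based count is also possible---take a maximum matching $M$, observe the unmatched set $W$ is independent, and use the no-augmenting-path condition to see that for each edge of $M$ at most one endpoint may send edges to $W$ unless both share a single common neighbour---but turning that into a bound tight enough to reach $e(S_{n,k-1})$ is delicate, which is why I would favour the Berge--Tutte route.
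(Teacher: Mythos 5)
The paper does not prove this lemma at all: it is imported verbatim as Theorem~4.1 of Erd\H{o}s--Gallai~\cite{matching theorem}, so there is no internal proof to compare against, and the only question is whether your argument is a valid self-contained proof. It is. The reduction to an edge-maximal $G'$ with $\nu(G')=k-1$ is sound (adding an edge raises $\nu$ by at most one, and $K_n$ contains a $k$-matching since $n\ge 5k/2\ge 2k$); the Berge--Tutte witness set $S$ gives exactly $q=t+n-2s$ odd components; the vertex count $q\le n-t$ yields $t\le s$; and the smoothing step is handled correctly, since the concentrated part has order $n-t-(q-1)=2(s-t)+1$, automatically odd, so the parity constraint costs nothing. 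Your quadratic $E(t)=\tfrac32t^2+(n-4s-\tfrac32)t+2s^2+s$ is computed correctly, $E(s)-E(0)=s\bigl(n-\tfrac{5s+3}{2}\bigr)$ is positive precisely when $n>\tfrac{5k}{2}-1$, i.e.\ exactly under the hypothesis $n\ge\tfrac{5k}{2}$, and strict convexity makes the equality case rigid: $t=s$ is forced, all components of $G-S$ are singletons, and tightness of the three edge bounds gives $G\cong K_{s}\vee\overline{K_{n-s}}=S_{n,k-1}$. Compared with the original Erd\H{o}s--Gallai argument (an elementary induction with edge counting, no matching structure theory), your deficiency-formula route trades self-containedness for a uniqueness statement that drops out automatically --- and that uniqueness is precisely what the present paper uses in Case~3 of its main proof.

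One degenerate point deserves a patch: for $k=1$ you have $s=0$, so the asserted strict inequality $E(s)>E(0)$ fails (both vanish). But then a graph with no matching of size one is edgeless, so $e(G)=0=e(S_{n,0})$ with equality exactly for $G\cong S_{n,0}$, and one line disposes of it. This is cosmetic; the proof is correct.
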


We also need the following variant version of Mclennan's result.

\begin{lem}\label{LEM: V-ES-4}
Given $n\ge 2k+2$. Let $G$ be a graph of order $n$ and $e(G)>\frac{(2k-1)n}2$. If $\Delta(G)=n-1$ then $G$ contains a tree $T\in \mathcal{T}^*_{\le 2k+2}$.
\end{lem}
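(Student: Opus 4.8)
The plan is to handle separately the trees of order at most $2k+1$ and those of order exactly $2k+2$. If $|V(T)|\le 2k+1$, then since $e(G)>\frac{(2k-1)n}{2}=\frac{((2k+1)-2)n}{2}$, Lemma~\ref{LEM: ES-4} applied to $G$ already yields every tree of order $2k+1$ with $\diam\le 4$, and hence (by passing to subtrees) every such tree of order at most $2k+1$. So I may assume $|V(T)|=2k+2$ and $T\neq S_{1,2,\ldots,2}$. I would root $T$ at its center according to the convention above, map the root to a vertex $u$ with $d_G(u)=n-1$, and put $H=G-u$. The numerical heart of the argument is that
\[
e(H)=e(G)-(n-1)>\frac{(2k-1)n}{2}-(n-1)=\frac{(2k-3)n+2}{2},
\]
and a direct comparison shows this exceeds both $\frac{(2k-3)(n-1)}{2}$ and $e(S_{n-1,k-2})$. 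Hence by Lemma~\ref{LEM: ES-4} the graph $H$ contains every tree of order at most $2k-1$ with $\diam\le 4$, and by Lemma~\ref{matching theorem} (applied with parameter $k-1$) $H$ contains a matching of size $k-1$.

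Next I would split according to the star forest $C_T$ left after deleting the root. Suppose first that some star of $C_T$ has order at least $3$, i.e.\ some neighbor $x$ of the root carries at least two leaves, say $\ell\ge 2$ of them. I would then remap, sending $x$ itself to the universal vertex $u$: since $u$ is adjacent to everything, the $\ell$ leaves of $x$ can be placed on any unused neighbors of $u$, while the tree $T'$ obtained from $T$ by deleting $x$ and its $\ell$ leaves is a tree of $\diam\le 4$ and order $2k+1-\ell\le 2k-1$. By the first paragraph $H$ already contains $T'$; embedding $T'$ in $H$, noting that the image of the root is automatically adjacent to $u$, and hanging the $\ell$ leaves of $x$ on fresh neighbors of $u$ completes a copy of $T$, the disjointness being guaranteed since $n$ is large.

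The remaining case is that every star of $C_T$ has order at most $2$, so $C_T$ is the matching $F_T$ together with some isolated vertices; writing $a=|E(F_T)|$ we have $2a+(\#\text{isolated})=2k+1$. Here the hypothesis $T\neq S_{1,2,\ldots,2}$ is used exactly once: the spider is the unique such tree with $a=k$, so $T\neq S_{1,2,\ldots,2}$ forces $a\le k-1$. I would then embed $C_T$ by using the matching of size $k-1$ in $H$ supplied above to realize the $a\le k-1$ edges of $F_T$, picking the needed isolated vertices among the many unused vertices of $H$, and sending the root to $u$; as $u$ is universal this produces a copy of $T$ in $G$.

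The main obstacle is the tightness of the edge count after deleting the universal vertex: removing $u$ costs $n-1$ edges and leaves $e(H)$ only just above $\frac{(2k-3)n}{2}$, which is precisely the threshold guaranteeing trees of order $2k-1$ (but not $2k$) via Lemma~\ref{LEM: ES-4} and a matching of size $k-1$ (but not $k$) via Lemma~\ref{matching theorem}. The non-spider hypothesis is exactly what keeps the problem within this reach, since embedding $S_{1,2,\ldots,2}$ would instead demand a matching of size $k$ in $H$, which the edge count does not deliver. The care needed to verify the two threshold comparisons, and to confirm that relocating a heavy neighbor to $u$ never requires more than $2k-1$ vertices inside $H$, is where the real work lies.
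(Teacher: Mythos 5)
Your argument follows essentially the same route as the paper's: split on whether the star forest $C_T$ obtained by deleting the root has a component of order at least three, handle that case by relocating the center of the big star to the universal vertex $u$ and embedding the remaining tree of order at most $2k-1$ in $G-u$ via Lemma~\ref{LEM: ES-4}, and in the remaining case use $T\neq S_{1,2,\ldots,2}$ to reduce $F_T$ to a matching of size at most $k-1$ realized inside $G-u$. Your bookkeeping in the first case (the bound $e(G-u)>\frac{(2k-3)n+2}{2}>\frac{(2k-3)(n-1)}{2}$, the count of spare neighbors of $u$) matches the paper and is correct.

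The one place you diverge is in producing the matching of size $k-1$ in $H=G-u$, and there your step does not quite go through as written. Lemma~\ref{matching theorem} applied with parameter $k-1$ requires the host graph to have order at least $\frac{5(k-1)}{2}$, but $|V(H)|=n-1\ge 2k+1$ only guarantees $2k+1\ge\frac{5(k-1)}{2}$ when $k\le 7$; for $k\ge 8$ and $n$ near the lower bound $2k+2$ the hypothesis of Lemma~\ref{matching theorem} simply fails, so you cannot invoke it. (Your comparison $e(H)>e(S_{n-1,k-2})$ is itself correct; it is the order precondition that breaks.) The paper sidesteps this entirely: it supposes the maximum matching $M$ of $G-u$ has size $m\le k-2$, lets $S$ be the vertices outside $M\cup\{u\}$, and uses maximality of $M$ to bound $e(G)\le m(n-2m-1)+(n-1)+m(2m-1)=(m+1)(n-2)-1<\frac{(2k-1)n}{2}$, contradicting the hypothesis directly with no lower bound on $n$ beyond $2k+2$. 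You could repair your version either by this direct count or by citing the full Erd\H{o}s--Gallai bound (which includes the alternative term $\binom{2k-3}{2}$, also exceeded by $e(H)$ here), but as stated the appeal to Lemma~\ref{matching theorem} is a genuine gap at the boundary of the parameter range.
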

\begin{proof}
Let $T\in \mathcal{T}^*_{\le2k+2}$.
Then each component of $C_T$ is a (trivial or nontrivial) star.

\noindent{\bf Case 1}. $C_T$ has a component, say $S_d$, of order at least three.

Let $G'$ be the graph obtained from $G$ by deleting a vertex $v$ of maximum degree and let $T'$ be the tree obtained from $T$ by  deleting $V(S_d)$. Then $$ e(G')=e(G)-(n-1)>\frac{(2k-3)(n-1)}2$$ and $|V(T')|\le 2k-1$.
By Lemma~\ref{LEM: ES-4}, $T'\subseteq G'$. Hence if we embed the center of $S_d$ in $v$, then we get an embedding of $T$ in $G$ since $d_G(v)=n-1$.

\noindent{\bf Case 2}. Each component of $C_T$ has order at most two.

Since $T\not={S}_{1,2,\ldots,2}$, then $\omega(F_T)\leq k-1$. Let $M$ be a maximum matching of $G-v$. Clearly, $m=|M|\leq k-2$, otherwise we can embed $T$ into $G$ centered at $v$ since $d_G(v)=n-1$. Assume $M= \{a_ib_i\,:\, i\in [1,m]\}$. Denote $S=V(G)\setminus (V(M)\cup\{v\})$. By the maximality of $m$, we have $e(G[S])=0$ and $e(\{a_i, b_i\}, S)\le |S|=n-2m-1$ for any $i\in [1,m]$.
Therefore,
\begin{eqnarray*}
e(G) &=& \sum_{i=1}^{m} e(\{a_i,b_i\}, S)+d_G(v)+e(G[S])+e(G[M])\nonumber\\
     &\le& m(n-2m-1)+(n-1)+m(2m-1)\nonumber\\
&=& (m+1)(n-2)-1\nonumber\\
&<&\frac{(2k-1)n}2,\nonumber\\
\end{eqnarray*}
a contradiction, where the last inequality holds since $m\le k-2$.

\end{proof}

\begin{lem}\label{LEM: Structure}
Given $n\ge 2k+2$ and let $G$ be a graph of order $n$. If $S_{n,k}$ is a subgraph of $G$, then $G$ contains a $T_{2k+2}$, unless $G=S_{n,k}$.
\end{lem}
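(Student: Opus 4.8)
The plan is to first reduce to the case where $G$ contains the slightly larger graph $S_{n,k}^+$, and then to embed every tree of $\mathcal{T}_{2k+2}$ into this graph, isolating and treating separately the unique tree that $S_{n,k}$ alone cannot accommodate.

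First I would fix the embedded copy of $S_{n,k}$, with vertex set partitioned into a clique $W$ of size $k$ and an independent set $U$ of size $n-k$, noting that it is a spanning subgraph of $G$. The only non-edges of this copy lie inside $U$, so if $G\neq S_{n,k}$ then $G$ carries an edge $u_1u_2$ with $u_1,u_2\in U$; hence $S_{n,k}^+\subseteq G$. It therefore suffices to embed an arbitrary $T\in\mathcal{T}_{2k+2}$ into $S_{n,k}^+$.

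Next, fix such a $T$, root it at a center, and let $d=\omega(C_T)$ be the number of children of the root and $s=\omega(F_T)$ the number of those children that are centers of nontrivial stars. Counting the $2k+2$ vertices of $T$ as the root together with the $s$ nontrivial stars (each of size at least $2$) and the $d-s$ trivial stars yields $2k+2\ge 1+(d-s)+2s$, i.e. $d+s\le 2k+1$. I would then exhibit two embeddings into $S_{n,k}$ itself, using no extra edge. If $s\le k-1$, place the root at a vertex of $W$ and the $s$ star-centers at $s$ further vertices of $W$, then send every remaining vertex of $T$ to an unused vertex of $G$; every such vertex is adjacent to its image-parent since $W$ is complete to everything. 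If instead $d\le k$, place the root at a vertex of $U$ and all $d$ children at distinct vertices of $W$, again sending the rest arbitrarily. In both embeddings the hypothesis $n\ge 2k+2$ leaves enough unused vertices. If both fail then $s\ge k$ and $d\ge k+1$, which with $d+s\le 2k+1$ forces $s=k$, $d=k+1$, every nontrivial star to be a single edge, and exactly one trivial star; that is, $T$ is precisely the excluded spider $S_{1,2,\ldots,2}$.

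Finally I would handle the spider using the extra edge $u_1u_2$: put its center at $u_1$, its $k$ degree-two middle vertices at the $k$ vertices of $W$, its $k$ end vertices at distinct vertices of $U\setminus\{u_1,u_2\}$, and its lone short-leg leaf at $u_2$. This is legitimate precisely because $u_1$ is adjacent to all of $W$ and, via the extra edge, to $u_2$, while each middle in $W$ is adjacent to its end; the count $n\ge 2k+2$ again suffices. I expect the main obstacle to be the bookkeeping that pins the spider down as the single tree resisting both generic embeddings, but once the inequality $d+s\le 2k+1$ is established this is forced, and the added edge is tailored exactly to seat the spider's short leg.
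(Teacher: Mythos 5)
Your proof is correct, but it takes a genuinely different route from the paper's. The paper also first extracts the extra edge to get $S_{2k+2,k}^+\subseteq G$, but then it works with the \emph{bipartition} $V(T)=A\cup B$, $|A|\le|B|$: if $|A|\le k$ it embeds $T$ via $K_{|A|,2k+2-|A|}\subseteq S_{2k+2,k}^+$, and if $|A|=|B|=k+1$ it finds a leaf $u\in A$ (possible since $e(T)=2k+1<2(k+1)$) and seats the edge from $u$ to its neighbour on the added edge $x_0x_1$, with $A\setminus\{u\}$ on the clique. That argument uses no diameter hypothesis at all, so the paper's lemma really does deliver \emph{every} tree of order $2k+2$ -- which matters only in that Case 3 of the main proof invokes it with the phrase ``any tree $T_{2k+2}$.'' Your argument instead exploits the centre-rooted star-forest decomposition available only for $\diam(T)\le 4$, gives two embeddings into $S_{n,k}$ itself (root in the clique when $s\le k-1$; root in the independent set when $d\le k$), and uses the count $d+s\le 2k+1$ to show the spider $S_{1,2,\ldots,2}$ is the unique tree escaping both, handling it alone with the extra edge. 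What you lose is generality (your version covers only $\mathcal{T}_{2k+2}$, though that still suffices for both places the lemma is applied, since the trees needed there have diameter at most four); what you gain is the sharper by-product that $S_{n,k}$ already contains every $T\in\mathcal{T}_{2k+2}^*$ and only the spider needs the added edge -- exactly the kind of refinement the concluding remarks conjecture for the order-$2k+3$ case.
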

\begin{proof}




Suppose $G\not=S_{n,k}$. Since $S_{n,k}$ is a spanning subgraph of $G$ and $n\ge 2k+2$, we have $S_{2k+2,k}^+\subseteq G$.
Let $H$ be a copy of $S_{2k+2,k}^+$ in $G$. Write $V(H)=\{x_0,x_1,\ldots,x_{2k+1}\}$ such that $H[\{x_2,\ldots,x_{k+1}\}]\cong K_k$ and $H[\{x_0, x_1,x_{k+2},\ldots, x_{2k+1}\}]\cong \overline{K_{k+2}}+{x_0x_1}$.

Let $V(T)=A\cup B$ with $|A|\leq|B|$ be the (unique) bipartition of $T$. If $|A|\leq k$, then one can embed $T$ into $H$ since $T$ is a subgraph of the complete bipartite graph $K_{|A|,2k+2-|A|}$ and $K_{|A|,2k+2-|A|}$ is a subgraph of $ H$ for any $1\leq |A|\leq k$. Now Assume that $|A|=|B|=k+1$. Since $e(T)=2k+1$, there exists at least one leaf, namely $u$, in $A$. Let $v$ be its neighbour in $B$. Then one can get an embedding of $T$ into $H$ by first embedding $u$ and $v$ into $x_0$ and $x_1$, respectively, and $A\setminus \{u\}$ and $B\setminus \{v\}$ into $\{x_2,\ldots,x_{k+1}\}$ and $\{x_{k+2},\ldots, x_{2k+1}\}$, respectively.
\end{proof}

\section{Proof of Theorem \ref{:Main}}
Note that $\mu=\mu(S_{n,k})$ is the largest root of the polynomial
\begin{equation*}
f(x)=x^2-(k-1)x-k(n-k).
\end{equation*}
Given a graph $G$ of order $n$, let $A=A(G)$ be the adjacent matrix of $G$ and let $B=f(A)$.
For $u\in V(G)$, let $B_u=\sum\limits_{1\leq i\leq n}B_{iu}$.

Now assume $G$ is a $\mathcal{T}_{2k+2}$-free graph on $n$ vertices with $\mu(G)\geq\mu(S_{n,k})$ and $G\not\cong S_{n,k}$.
By Lemma~\ref{LEM: Au-Matrix}, there must exist  a vertex $u\in V(G)$ such that $B_u\geq 0$.

\begin{claim}\label{:C1}
Every vertex $u\in V(G)$ with $B_u\ge 0$ has degree at least $k+1$.
\end{claim}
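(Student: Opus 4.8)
The plan is to write $B_u$ explicitly in terms of vertex degrees and then exploit the hypothesis $B_u\ge 0$ to force $d_G(u)$ to be large. Since $B=f(A)=A^2-(k-1)A-k(n-k)I$, summing the $u$-th column gives $B_u=\sum_i (A^2)_{iu}-(k-1)\sum_i A_{iu}-k(n-k)$, the last term coming from the single nonzero diagonal entry of $I$ in that column. Using $\sum_i (A^2)_{iu}=\sum_j d_G(j)A_{ju}=\sum_{j\sim u} d_G(j)$ (the sum of the degrees of the neighbours of $u$) together with $\sum_i A_{iu}=d_G(u)$, I obtain
$$B_u=\sum_{j\sim u} d_G(j)-(k-1)\,d_G(u)-k(n-k).$$

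Next I would bound the first sum crudely from above. Each of the $d_G(u)$ neighbours of $u$ has degree at most $n-1$, so $\sum_{j\sim u} d_G(j)\le d_G(u)(n-1)$, which yields
$$B_u\le d_G(u)(n-1)-(k-1)\,d_G(u)-k(n-k)=\big(d_G(u)-k\big)(n-k).$$
Consequently, if $d_G(u)\le k-1$ then $B_u<0$, contradicting $B_u\ge 0$; this already forces $d_G(u)\ge k$.

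The only delicate point is to exclude the boundary case $d_G(u)=k$, where the estimate above degenerates to $B_u\le 0$. Here the hypothesis $B_u\ge 0$ forces equality throughout, so every one of the $k$ neighbours of $u$ must have degree exactly $n-1$. These $k$ vertices are then adjacent to all others and to one another, so they induce a $K_k$ joined to the remaining $n-k$ vertices; that is, $S_{n,k}\subseteq G$. Since $G\not\cong S_{n,k}$ by assumption, Lemma~\ref{LEM: Structure} produces a copy of every tree of order $2k+2$ in $G$, in particular a member of $\mathcal{T}_{2k+2}$, contradicting the $\mathcal{T}_{2k+2}$-freeness of $G$. Hence $d_G(u)=k$ is impossible and $d_G(u)\ge k+1$, as claimed. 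I expect this boundary case to be the main obstacle, since it is exactly where the spectral inequality is tight, and it is the one place where the degree estimate alone is insufficient and the structural Lemma~\ref{LEM: Structure} together with $G\not\cong S_{n,k}$ must be brought in.
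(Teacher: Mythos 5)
Your proof is correct and follows essentially the same route as the paper's: the same column-sum identity for $B_u$ (the paper phrases it via $\sum_{x\in N^1(u)}d_L(x)$ with $d_L(x)=d_G(x)-1$, which is equivalent to your $\sum_{j\sim u}d_G(j)$), the same upper bound $(d_G(u)-k)(n-k)$, and the same handling of the tight case $d_G(u)=k$ by extracting $S_{n,k}\subseteq G$ and invoking Lemma~\ref{LEM: Structure}. No gaps.
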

\begin{proof}[Proof of Claim 1:] For any $v\in~V(G)$, define $L=L_v$ be the graph with vertex set $V(L)=N^1(v)\cup N^2(v)$ and edge set $E(L)=E(N^1(v))\cup E(N^1(v),N^2(v))$.
By the definition of $B$, for any $v\in~V(G)$, we have
\begin{equation}\label{:C2}
 B_v=\sum\limits_{x\in N^1(v)} d_L(x)-(k-2)d_G(v)-k(n-k).
\end{equation}
Since $|N^1(v)|=d_G(v)$ and $d_L(x)\le n-2$ for any $x\in N^1(v)$, we have
\begin{equation}\label{EQN£ºe2}
B_v\leq (n-2)d_G(v)-(k-2)d_G(v)-k(n-k)=(d_G(v)-k)(n-k).
\end{equation}
If $B_v\geq0$ then $d_G(v)\ge k$. If $d_G(v)\ge k+1$ then we are done. Now assume $d_G(v)=k$. Then $B_v=0$.  By (\ref{EQN£ºe2}), we have $L\cong S_{n-1,k}$, or equivalently, $S_{n, k}$ is a subgraph of $G$. By Lemma~\ref{LEM: Structure}, either $G=S_{n,k}$ or $G$ contains a $T_{2k+2}$, a contradiction to the assumption.

 \end{proof}

Now let $u\in V(G)$ be a vertex with $B_u\ge 0$. Then $d_G(u)\geq k+1$ by Claim~\ref{:C1}. Let $T\in \mathcal{T}_{2k+2}$.
Let $p=\omega(C_T)$ and $p'=\omega(F_T)$.
Then $p'\le e(F_T)=2k+1-p$. Thus
$p'\leq k$  since $p'\le p\le 2k+1$.

\medskip

\noindent{\bf Case 1}. $d_G(u)< p$.

In this case,
we show that $T$ can be embedded in $G$ rooted at some vertex of $N^1(u)$, so we get a contradiction. For each $x\in N^1(u)$, let $$C(x)=\{y\in N^2(u)\, :\, d_L(y)\geq {2k+2-p}\}.$$

\begin{claim}\label{:C4}
There exists some vertex $x\in N^1(u)$ such that $|C(x)|\geq p'$ and $|N^1(x)\cap N^2(u)|\geq p$.
\end{claim}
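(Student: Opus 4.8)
The plan is to locate a single vertex $x^\ast\in N^1(u)$ that already has more than $p$ neighbours in the ``heavy'' part of the second shell; since $p\ge p'$, such an $x^\ast$ meets both demands of the claim at once. Throughout put $d:=d_G(u)$ and, matching the definition of $C(x)$, write $H:=\{y\in N^2(u):d_L(y)\ge 2k+2-p\}$ for the heavy vertices, so that $C(x)\supseteq N^1(x)\cap H$.

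First I would convert $B_u\ge 0$ into edge information. By~(\ref{:C2}) it says $\sum_{x\in N^1(u)}d_L(x)\ge (k-2)d+k(n-k)$; splitting $\sum_{x\in N^1(u)}d_L(x)=2e(G[N^1(u)])+e(N^1(u),N^2(u))$ and using that $2e(G[N^1(u)])\le d(d-1)<4k^2$ (in Case~1 we have $d<p\le 2k+1$) yields $e(N^1(u),N^2(u))\ge k(n-k)-4k^2$, a quantity linear in $n$.

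Next I would throw away the light vertices. Each $y\in N^2(u)\setminus H$ contributes at most $2k+1-p$ edges to $N^1(u)$, so the light part carries at most $(2k+1-p)|N^2(u)|\le (2k+1-p)n$ edges, and subtracting gives
$$e(N^1(u),H)\ \ge\ k(n-k)-4k^2-(2k+1-p)n\ =\ (p-k-1)\,n-5k^2.$$
The decisive point is that $p\ge k+2$: Claim~\ref{:C1} gives $d\ge k+1$ while Case~1 gives $d<p$, so $p-k-1\ge 1$ and the coefficient of $n$ is positive, leaving $e(N^1(u),H)\ge n-5k^2$. Averaging $e(N^1(u),H)=\sum_{x\in N^1(u)}|N^1(x)\cap H|$ over the at most $2k$ vertices of $N^1(u)$ then produces $x^\ast$ with $|N^1(x^\ast)\cap H|\ge (n-5k^2)/(2k)$, which exceeds $2k+1\ge p$ once $n>2(k+2)^4$. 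As $N^1(x^\ast)\cap H\subseteq N^1(x^\ast)\cap N^2(u)$ and $C(x^\ast)\supseteq N^1(x^\ast)\cap H$, this one vertex satisfies $|N^1(x^\ast)\cap N^2(u)|>p$ and $|C(x^\ast)|>p\ge p'$, as required.

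The step I expect to be the genuine obstacle is the control of the light vertices, not the averaging: a priori they could absorb almost all of the $\approx kn$ edges leaving $N^1(u)$, and the estimate survives only because the per-vertex cap $2k+1-p$ combines with $p\ge k+2$ to leave a strictly positive $(p-k-1)n$ worth of edges inside $H$. Pinning down $p\ge k+2$ from the Case~1 hypothesis and Claim~\ref{:C1} is thus the linchpin; everything after it is the two displayed counting estimates together with the size assumption $n>2(k+2)^4$.
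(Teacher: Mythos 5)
Your proof is correct, but it takes a genuinely different route from the paper's. The paper argues by contradiction with a second-moment estimate: assuming every $x\in N^1(u)$ has $|C(x)|\le p'-1$, it bounds $\sum_{x\in N^1(u)}\sum_{y\in N^1(x)\cap N^2(u)}d_L(y)=\sum_{y\in N^2(u)}d_L(y)^2$ from above by roughly $(k^2-1)n$ and from below, via Cauchy--Schwarz applied to $\sum_y d_L(y)$, by roughly $k^2n$, then extracts the second condition $|N^1(x)\cap N^2(u)|\ge p$ in a separate step by choosing $x$ to maximize an inner sum. You instead run a direct first-moment count: $B_u\ge 0$ forces about $kn$ edges from $N^1(u)$ into $N^2(u)$, the light vertices can absorb at most $(2k+1-p)n\le(k-1)n$ of them, so the heavy set $H$ receives at least $n-5k^2$ edges, and averaging over the at most $2k$ vertices of $N^1(u)$ yields a single $x^\ast$ with more than $p\ge p'$ heavy neighbours, settling both conditions of the claim simultaneously. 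Both arguments pivot on the same arithmetic fact --- that $d_G(u)<p$ together with $d_G(u)\ge k+1$ (Claim~\ref{:C1}) forces $2k+1-p\le k-1$, i.e.\ $p\ge k+2$, which you correctly identify as the linchpin --- but your version avoids Cauchy--Schwarz entirely, needs only the crude bound $2e(G[N^1(u)])<4k^2$, and is arguably cleaner in that it does not require the slightly delicate step of certifying that one vertex realizes both lower bounds at once. All your numerical checks go through under $n>2(k+2)^4$: the averaging needs only $n>9k^2+2k$.
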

\begin{proof}[Proof of Claim~\ref{:C4}:]
Suppose to the contrary that for any vertex $x\in N^1(u)$, we have $|C(x)|\leq p'-1$.
Hence
\begin{eqnarray}\label{u-e1}
&&\sum_{x\in N^1(u)}\sum_{y\in N^1(x)\cap N^2(u)}d_L(y)\nonumber\\
&&\leq |N^1(u)|\left[(p'-1)|N^1(u)|+(n-1-|N^1(u)|-p'+1)(2k+1-p)\right]\nonumber\\
&&= d_G(u)\left[(2k+1-p)n-(2k+1-p-p'+1)d_G(u)-p'(2k+1-p)\right]\nonumber\\
&&\le d_G(u)(2k+1-p)n\nonumber\\
&&\leq d_G(u)(2k-d_G(u))n\nonumber\\
&& \le (k^2-1) n,
\end{eqnarray}
the last inequality holds since $d_G(u)\ge k+1>k$.

On the other hand,
\begin{eqnarray}\label{l-e2}
&&\sum_{x\in N^1(u)}\sum_{y\in N^1(x)\cap N^2(u)}d_L(y)=\sum_{y\in N^2(u)}d_L^2(y)\nonumber\\
&&\geq\frac{1}{|N^2(u)|}\left(\sum_{y\in N^2(u)}d_L(y)\right)^2\nonumber\\
&&\ge\frac{1}{|N^2(u)|}\left(\sum_{x\in N^1(u)}d_L(x)-|N^1(u)|\left(|N^1(u)|-1\right)\right)^2\nonumber\\
&&>\frac{1}{n}\left(B_u+(k-2)d_G(u)+k(n-k)-d_G(u)(d_G(u)-1)\right)^2\nonumber\\
&&>\frac{1}{n}(kn-6k^2)^2,
\end{eqnarray}
the first inequality holds by Cauchy-Schwartz inequality, the second inequality holds since $N^2(u)$ is an independent set of $L$, and the last inequality holds since $k+1\le d_G(u)\le p\le 2k+1$. By (\ref{u-e1}) and (\ref{l-e2}), we have $\frac{(kn-6k^2)^2}{n}<(k^2-1) n$, that is $n<\frac{6k^2}{k-\sqrt{k^2-1}}<12k^3$, a contradiction to $n\ge 2(k+2)^4$. Hence, there is at least one vertex $x\in N^1(u)$ with $|C(x)|\geq p'$, without loss of generality, assume $x$ is such a vertex in $N^1(u)$ which maximizes the value of the summation $\sum\limits_{y\in N^1(x)\cap N^2(u)}d_L(y)$. Note that $d_L(y)\le |N^1(u)|=d_G(u)$. By (\ref{l-e2}), we have
\begin{eqnarray*}
|N^1(x)\cap N^2(u)|>\frac 1{d_G^2(u)}\frac{(kn-6k^2)^2}{n}\geq\frac{(kn-6k^2)^2}{9k^2n}>\frac{n}{9}-2k> p.
\end{eqnarray*}
This completes the proof of the claim.

\end{proof}

By Claim~\ref{:C4}, we can choose a vertex $x\in N^1(u)$ with $|C(x)|\geq p'$ and $|N^1(x)\cap N^2(u)|\geq p$. Note that for any vertex $y\in C(x)$, $d_L(y)\ge 2k+2-p$. Hence $d_{L-x}(y)\ge 2k+1-p$ for any $y\in C(x)$. {Therefore, we can greedily embed the star forest $F_T$  into $L-x$ with centers in $C(x)$}. 
This completes the proof of this case.


\medskip
\noindent{\bf Case 2}. $p\le d_G(u)\leq {2(k+2)^3}$.
\medskip

If we find a copy $F$ of $F_T$ in $L=L_u$ with its centers in $N^1(u)$ then the subgraph induced by $V(F)\cup \{u\}$ contains a copy of $T$ rooted at $u$,   also a contradiction to the assumption. In the following, we will show that we truly can find such an $F_T$ in $L$.

Let $C=\{x\in N^1(u)\, : \, d_L(x)\ge 2k\}$.
If $|C|\geq p'$, then we can greedily embed  $F_T$ into $L$ with its centers in $C$. Hence it is sufficient to show that such a subset $C$ exists.

\begin{claim}\label{:C3}
We have $|C|\geq k\ge p'$.
\end{claim}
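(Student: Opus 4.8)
The plan is to exploit the hypothesis $B_u\ge 0$ through the identity~(\ref{:C2}), which gives the lower bound
$$\sum_{x\in N^1(u)} d_L(x)\;\ge\;(k-2)d_G(u)+k(n-k),$$
a quantity of order $kn$. The set $C$ collects the vertices of $N^1(u)$ whose $L$-degree is large (at least $2k$), so the idea is to argue that if $C$ were small the sum above could not be as large as $kn$, contradicting the bound just obtained. This is where the Case~2 assumption $d_G(u)\le 2(k+2)^3$ does the crucial work: it bounds the number of summands and keeps the ``error'' contributions polynomial in $k$ rather than growing with $n$.

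Concretely, I would argue by contradiction and suppose $|C|\le k-1$. I would then split $N^1(u)$ into $C$ and its complement. Each vertex of $L$ has degree at most $|V(L)|-1\le n-2$, so the vertices of $C$ contribute at most $|C|(n-2)$; every vertex $x\in N^1(u)\setminus C$ has $d_L(x)\le 2k-1$ by definition of $C$, so these contribute at most $(d_G(u)-|C|)(2k-1)$. Since the resulting upper bound $|C|(n-2k-1)+d_G(u)(2k-1)$ is increasing in $|C|$ (because $n-2k-1>0$), its maximum over $|C|\le k-1$ occurs at $|C|=k-1$, yielding
$$\sum_{x\in N^1(u)} d_L(x)\;\le\;(k-1)(n-2k-1)+d_G(u)(2k-1).$$

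Combining the two displayed estimates and cancelling the leading $kn$ against $(k-1)n$ should leave an inequality of the shape $n+k^2-k-1\le (k+1)\,d_G(u)$. Substituting the Case~2 bound $d_G(u)\le 2(k+2)^3$ then forces $n<2(k+2)^3(k+1)<2(k+2)^4$, contradicting the standing hypothesis $n>2(k+2)^4$. Hence $|C|\ge k$, and since $p'\le k$ was established before Case~1, the chain $|C|\ge k\ge p'$ follows. The step I expect to require the most care is the bookkeeping in the cancellation: one must verify that the linear-in-$n$ terms cancel exactly so that the surviving constraint is genuinely of the form ``$n\le$ (polynomial in $k$)'', and that the Case~2 ceiling on $d_G(u)$ is strong enough to beat $2(k+2)^4$. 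The rest is routine arithmetic, and no embedding argument is needed here—only degree counting feeding off $B_u\ge 0$.
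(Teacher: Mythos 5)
Your proposal is correct and follows essentially the same route as the paper: assume $|C|\le k-1$, bound the contribution of $C$ by $(k-1)(n-2)$ and of $N^1(u)\setminus C$ by roughly $2k$ per vertex, and combine with $B_u\ge 0$ and the Case~2 ceiling $d_G(u)\le 2(k+2)^3$ to force $n<2(k+2)^4$, a contradiction. The only cosmetic differences are that you use the slightly tighter bound $2k-1$ and make the monotonicity in $|C|$ explicit; the paper's arithmetic lands on $B_u\le -n+(k+2)d_G(u)-k^2+2<0$, which is the same cancellation you describe.
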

\begin{proof}[Proof of Claim~\ref{:C3}:]
Suppose to the contrary that $|C|\le k-1$. Then
\begin{eqnarray*}
B_u&=&\sum_{x\in N^1(u)}d_L(x)-(k-2)d_G(u)-k(n-k)\\
&\leq&(k-1)(n-2)+(d_G(u)-k+1)\cdot 2k-(k-2)d_G(u)-k(n-k)\\
&=&-n+(k+2)d_G(u)-k^2+2\\
&\le& -n+{2(k+2)^4}-k^2+2<0,
\end{eqnarray*}
a contradiction. The proof of the case is completed.
\end{proof}

\noindent{\bf Case 3}. $d_G(u)>2(k+2)^3$.

Let $L=L_u$ be defined the same as in Claim~\ref{:C1}.
Recall that
\begin{equation*}\label{:C2}
 B_u=\sum\limits_{x\in N^1(u)} d_L(x)-(k-2)d_G(u)-k(n-k)\ge 0,
\end{equation*}
we have $\sum\limits_{x\in N^1(u)} d_L(x)\ge (k-2)d_G(u)+k(n-k)$.

If $T=S_{1,2,\ldots,2}$, then both $L$ and $G[N^1(u)]$ contain no matching of size $k$ (otherwise we have an embedding of $T$ rooted at $u$). Hence Lemma \ref{matching theorem} implies that $e(G[N^1(u)])\leq e(S_{|N^1(u)|,k-1})$ and $e(L)\leq e(S_{|N^1(u)|+|N^2(u)|,k-1})$. So we have
\begin{eqnarray*}
\sum\limits_{x\in N^1(u)} d_L(x)&=& e(G[N^1(u)])+e(L)\\
&\leq& e(S_{|N^1(u)|,k-1})+e(S_{|N^1(u)|+|N^2(u)|,k-1})\\
&=&2(d_G(u)-k+1)(k-1)+(k-1)(k-2)+(k-1)|N^2(u)|\\
&\leq&(k-2)d_G(u)+k(n-k)-|N^2(u)|.
\end{eqnarray*}
On the other hand, we have $\sum\limits_{x\in N^1(u)} d_L(x)\ge (k-2)d_G(u)+k(n-k)$. This implies that $N^2(u)=\emptyset$ and $L=G[N^1(u)]\cong S_{n-1,k-1}$, and hence $G$ contains a $S_{n,k}$ as a subgraph. Lemma \ref{LEM: Structure} implies that any tree $T_{2k+2}$ (including $S_{1,2,\ldots,2}$) is a subgraph of $G$ since $G\ne S_{n,k}$, a contradiction.

Now assume that $T\in T^*_{2k+2}$.

\medskip
\noindent{\bf Subase 3.1}. $e(N^{1}(u),N^{2}(u))>k d_{G}(u)+2k(|N^{2}(u)|+1)-k(n-k)$.

Let $G_{1}$ be the graph with vertex set $\{u\}\cup V(L)$ and edge set $E(u,N^{1}(u))\cup E(L)$. Then  we have
\begin{eqnarray*}
e(G_{1})&=&d_G(u)+\frac12\left(\sum\limits_{x\in N^1(u)} d_L(x)+e\left(N^{1}(u),N^{2}(u)\right)\right)\\
&>&d_G(u)+\frac {(k-2)d_G(u)+k(n-k)+k d_{G}(u)+2k(|N^{2}(u)|+1)-k(n-k)}2\\
&=& k(d_G(u)+|N^{2}(u)|+1)\\
&=& k|V(G_1)|.
\end{eqnarray*}
By Lemma~\ref{LEM: ES-4}, $G_1$ contains a $T_{2k+2}$ of diameter at most 4, a contradiction.

\medskip
\noindent{\bf Subcase 3.2}. $e(N^{1}(u),N^{2}(u))\le k d_{G}(u)+2k(|N^{2}(u)|+1)-k(n-k)$.

\medskip
\noindent  Let $G_{2}$ be the graph with vertex set $\{u\}\cup N^{1}(u)$ and edge set $E(u,N^{1}(u))\cup E(N^{1}(u))$. Then we have
\begin{eqnarray*}
e(G_{2})&=&d_G(u)+\frac 12\left(\sum\limits_{x\in N^1(u)} d_L(x)-|E(N^{1}(u),N^{2}(u))|\right)\\
&\geq&d_G(u)+\frac{(k-2)d_G(u)+k(n-k)-k d_{G}(u)-2k(|N^{2}(u)|+1)+k(n-k)}2\\
&=& k(n-|N^2(u)|)-k(k+1)\\
&\ge&k(d_G(u)+1)-k(k+1)\\
&>&\frac{(2k-1)(d_G(u)+1)}2\\
&=&\frac{(2k-1)|V(G_2)|}2,
\end{eqnarray*}
the last inequality holds since $d_G(u)>2(k+2)^3$.

Note that $\Delta(G_2)=d_G(u)=|V(G_2)|-1$. By Lemma~\ref{LEM: V-ES-4}, $G_2$ contains a $T_{2k+2}\in\mathcal{T}_{2k+2}^*$, a contradiction.

The proof is completed. \quad\rule{1.5mm}{2.25mm}

\section{Concluding remarks}
In fact, Nikiforov's conjecture has two parts.
\begin{conj}[Conjecture 16 in~\cite{SEP of paths and cycles}]\label{CONJ: conjecture 2}
Let $k\geq2$ and let $G$ be a graph of sufficiently large order $n$.

(a) if $\mu(G)\geq \mu(S_{n,k})$, then $G$ contains all trees of order $2k+2$ unless $G=S_{n,k}$;

(b) if $\mu(G)\geq \mu(S_{n,k}^+)$, then $G$ contains all trees of order $2k+3$ unless $G=S_{n,k}^+$.

\end{conj}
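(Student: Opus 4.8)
The plan is to argue by contradiction: assume $G$ is $\mathcal{T}_{2k+2}$-free, $\mu(G)\ge\mu(S_{n,k})$, and $G\neq S_{n,k}$, and derive a contradiction. The first move is to translate the spectral hypothesis into a purely local statement at a single vertex. Since $\mu(S_{n,k})$ is the largest root $\mu'$ of $f(x)=x^2-(k-1)x-k(n-k)$, I would set $B=f(A(G))=A^2-(k-1)A-k(n-k)I$ and apply Lemma~\ref{LEM: Au-Matrix} with $a=k-1$, $b=k(n-k)$. If all column sums $B_u$ were nonpositive we would get $\mu(G)\le\mu'$, with equality only when every $B_u=0$; since $\mu(G)\ge\mu'$ this guarantees a vertex $u$ with $B_u\ge 0$. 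Expanding $(A^2)_{xu}$ as a walk count and organising the neighbourhood of $u$ into the auxiliary graph $L=L_u$ on $N^1(u)\cup N^2(u)$ (keeping the edges inside $N^1(u)$ and between $N^1(u)$ and $N^2(u)$), one rewrites this as $B_u=\sum_{x\in N^1(u)}d_L(x)-(k-2)d_G(u)-k(n-k)$. Everything afterwards is about embedding a prescribed $T\in\mathcal{T}_{2k+2}$ into $G$ using the abundance of edges that $B_u\ge 0$ forces around $u$.

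Next I would pin down the degree of $u$. Using the crude bound $d_L(x)\le n-2$ for each $x\in N^1(u)$ gives $B_u\le (d_G(u)-k)(n-k)$, so $B_u\ge 0$ already forces $d_G(u)\ge k$. In the borderline case $d_G(u)=k$ the inequality is tight, which means $L\cong S_{n-1,k}$, i.e. $N^1(u)$ induces a clique adjacent to everything; equivalently $S_{n,k}\subseteq G$. Lemma~\ref{LEM: Structure} then says either $G=S_{n,k}$ or $G$ already contains a $T_{2k+2}$, both contradicting the standing assumptions. Hence every relevant $u$ satisfies $d_G(u)\ge k+1$.

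With such a $u$ fixed, I would root the target tree $T$ at a center and delete that center, leaving a star forest $C_T$ with $p$ components, of which $p'\le k$ are nontrivial (these form $F_T$). The goal is to realise $T$ with its root at $u$ by packing the stars of $F_T$ into $L$ with their centers in $N^1(u)$ and their leaves in $N^2(u)$. I would split on the size of $d_G(u)$ relative to $p$. When $d_G(u)<p$ there are too few first neighbours to host the root at $u$, so instead I would embed $T$ rooted at a cleverly chosen $x\in N^1(u)$; the tool here is a Cauchy--Schwarz estimate showing that $\sum_{y\in N^2(u)}d_L(y)^2$ is large (because $B_u\ge 0$ makes $\sum_x d_L(x)$ large), which forces some $x$ to have both many high-$L$-degree second neighbours and many second neighbours overall. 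When $p\le d_G(u)\le 2(k+2)^3$, I would show at least $k\ge p'$ vertices of $N^1(u)$ have $d_L(\cdot)\ge 2k$ (otherwise $\sum_x d_L(x)$ is too small and $B_u<0$), then greedily seat the $\le k$ centers of $F_T$ there. When $d_G(u)>2(k+2)^3$, I would treat the spider $S_{1,2,\ldots,2}$ on its own via Lemma~\ref{matching theorem}---if it is not embeddable then neither $L$ nor $G[N^1(u)]$ has a matching of size $k$, pinning their edge counts and again forcing $S_{n,k}\subseteq G$---and for $T\in\mathcal{T}_{2k+2}^*$ pass to an auxiliary graph on $\{u\}\cup N^1(u)$ (or $\{u\}\cup V(L)$) and invoke the edge-density thresholds of Lemma~\ref{LEM: ES-4} and Lemma~\ref{LEM: V-ES-4}.

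The hard part will be the case $d_G(u)<p$: I must make the Cauchy--Schwarz counting quantitatively tight enough that the hypothesis $n>2(k+2)^4$ yields a \emph{single} neighbour $x$ carrying at least $p'$ second neighbours of $L$-degree $\ge 2k+2-p$ together with at least $p$ second neighbours in total, and I must ensure that deleting $x$ leaves each such center with $L$-degree still $\ge 2k+1-p$ so the stars of $F_T$ can be embedded disjointly and greedily. The spider exception in the large-degree case is the other delicate point, since $S_{1,2,\ldots,2}$ is precisely the tree to which the matching bound of Lemma~\ref{matching theorem} is tuned, so that branch must be closed separately through Lemma~\ref{LEM: Structure}. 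The role of the explicit threshold $n>2(k+2)^4$ is exactly to reconcile the three case-by-case estimates simultaneously.
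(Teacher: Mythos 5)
Your proposal does not prove the statement it is attached to. The statement is Nikiforov's full conjecture, which asks for \emph{all} trees of order $2k+2$ in part (a) and additionally has a part (b) about $S_{n,k}^+$ and trees of order $2k+3$. The paper does not prove this conjecture either --- it is stated as an open conjecture, and the paper's Theorem~\ref{:Main} establishes only the special case of (a) for trees of diameter at most four. Your argument is, in essence, a faithful reconstruction of that proof of Theorem~\ref{:Main}, but every load-bearing step in it is tied to the diameter restriction: you begin by assuming $G$ is $\mathcal{T}_{2k+2}$-free (a family defined by $\diam(T)\le 4$); the decomposition of $T$ into a root plus a star forest $C_T$ is valid only when $\diam(T)\le 4$ (for a general tree, deleting a center leaves components of arbitrary depth, not stars); and Lemmas~\ref{LEM: ES-4} and~\ref{LEM: V-ES-4} are McLennan's resolution of the Erd\H{o}s--S\'os conjecture for diameter at most four, which is not known for general trees. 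So the greedy seating of star centers in $N^1(u)$, the Cauchy--Schwarz case $d_G(u)<p$, and the large-degree case all collapse for trees of larger diameter.

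Concretely, two things are missing. First, an argument for trees of diameter greater than four, where you would need both a replacement for the center-plus-star-forest embedding scheme and a replacement for the Erd\H{o}s--S\'os input (which is precisely the open part of the problem). Second, part (b) of the conjecture is not addressed at all: it concerns the threshold $\mu(S_{n,k}^+)$ and trees of order $2k+3$, with extremal graph $S_{n,k}^+$, and none of your lemmas are calibrated to that setting. What you have written is a correct and essentially complete outline of the paper's proof of its main theorem, but that theorem is strictly weaker than the conjecture you were asked to prove.
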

In the paper, we prove that (a) is true for all trees $T_{2k+2}\in\mathcal{T}_{2k+2}$,  and $S_{n,k}$ is the unique extremal graph with maximum spectral radius among all of the $\mathcal{T}_{2k+2}$-free graphs of order $n$.
Let $S_{2,\ldots,2}$ be the spider of order $2k+3$ with each leg of length two and let $\mathcal{T}^*_{2k+3}=\mathcal{T}_{2k+3}\setminus\{S_{2,\ldots,2}\}$.
We believe that, for sufficiently large integer $n$, (b) is true for all trees $T_{2k+3}\in\mathcal{T}_{2k+3}$, moreover,  $S_{n,k}$ is the unique extremal graph for $\mathcal{T}^*_{2k+3}$ and $S_{n,k}^+$ for $S_{2,\ldots,2}$.
We leave this as a problem.



\begin{thebibliography}{99}





\bibitem{E-S conj}

 P. Erd\H{o}s, Some problems in graph theory, Theory of Graphs and Its Applications, M. Fiedler, Editor, Academic Press, New York, 1965, pp. 29-36.

\bibitem{matching theorem}
P. Erd\"os, T. Gallai, On maximal paths and circuits of graphs, Acta Math. Acad. Sci. Hungar. 10 (1959) 337356.

\bibitem{Gao-Hou-17}
J. Gao, X. Hou, The spectral radius of graphs without long cycles, submitted.


\bibitem{diameter 4}
A. McLennan, The  Erd\H{o}s-S\'{o}s Conjecture for trees of diameter four. J. Graph Theory, 2005, 49(4): 291-301.

\bibitem{SEP of paths and cycles}
V. Nikiforov, The spectral radius of graphs without paths and cycles of specified length. Linear Algebra Appl., 2010, 432(9): 2243-2256.

\bibitem{survy of SEP}
V.  Nikiforov, Some new results in extremal graph theory.
Surveys in Combinatorics 2011, Cambridge University Press, 2011, 141-181.

\bibitem{SEP of C5&C6}
 W. Yuan, B. Wang, M. Zhai, On the spectral radii of graphs without given cycles. Electron J. Linear Algebra, 2012, 23(1): 43.


\bibitem{SEP of C4}
M, Zhai, B. Wang, Proof of a conjecture on the spectral radius of $C_4$-free graphs. Linear Algebra Appl., 2012, 437(7): 1641-1647.









\end{thebibliography}
\end{document}